\theoremstyle{plain}
\newtheorem{theorem}{Theorem}
\newtheorem{corollary}[theorem]{Corollary}
\newtheorem{lemma}[theorem]{Lemma}
\newtheorem{proposition}[theorem]{Proposition}
\theoremstyle{definition}
\newtheorem{definition}[theorem]{Definition}
\newtheorem{example}[theorem]{Example}
\newtheorem{remark}[theorem]{Remark}
\newenvironment{proof}{\noindent{\bf Proof:\/}}{$\Box$\vskip 0.1in}
 \def\dist{{\rm dist}}
\title{ {\Large Hypercubes and Isometric Words\\
{based on}
Swap and {Mismatch} Distance		}
\thanks{ Partially supported
by INdAM-GNCS Project 2022 and 2023, FARB Project  ORSA229894 
of University of Salerno, TEAMS Project of University of Catania and by  the MIUR Excellence Department Project MatMod@TOV awarded to the Department of Mathematics, University of Rome Tor Vergata.
 }
}
\author{Author One$^1$\thanks{Author One was partially supported by Grant XXX} \and Author Two$^2$ \and Author Three$^1$}
\date{
	$^1$Organization 1 \\ \texttt{\{auth1, auth3\}@org1.edu}\\%
	$^2$Organization 2 \\ \texttt{auth3@inst2.edu}\\[2ex]%
}
\author{ M. Anselmo$^1$ \and  G. Castiglione$^2$ \and  M. Flores$^{1,2}$ \and   D. Giammarresi$^3$ \and   M. Madonia$^4$ \and S. Mantaci$^2$
  }%
  \date{	
$^1${ \normalsize Dipartimento di Informatica,
Universit\`a di Salerno, 
Italy. 
\\
{\tt
\{manselmo, mflores \}@unisa.it}} \\
$^2${ \normalsize Dipartimento di Matematica e Informatica, Universit\`a di Palermo, 
Italy  
{\tt \{giuseppa.castiglione, sabrina.mantaci \}@unipa.it }} \\
$^3${ \normalsize Dipartimento di Matematica.
 Universit\`a  Roma ``Tor Vergata''
  Italy. 
{\tt giammarr@mat.uniroma2.it}}\\
$^4${\normalsize Dipartimento di Matematica eInformatica, Universit\`a  di
  Catania, 
  Italy.
  {\tt madonia@dmi.unict.it}} 
}
\begin{document}

\thispagestyle{empty}

 \maketitle

\begin{abstract} 
The hypercube of dimension $n$ is the graph whose vertices are 
the $2^n$ binary words of length $n$, 
and there is an edge between two of them if 
they have Hamming distance $1$.  
We consider an edit distance based on swaps and mismatches,  
to which we refer as {\em tilde-distance},
and define the 
 {\em tilde-hypercube}
 with edges linking words at tilde-distance $1$.
Then, 
we introduce and study some isometric subgraphs of the tilde-hypercube 
obtained by using special words called {\em tilde-isometric words}. The subgraphs keep 
only the vertices that avoid a given tilde-isometric word as a factor. In the case of word $11$, 
the subgraph is called
 {\em tilde-Fibonacci cube}, as a generalization of the classical Fibonacci cube.  
 The tilde-hypercube and the tilde-Fibonacci cube can be recursively defined; the same holds for the number of their edges. This allows an asymptotic estimation of 
 the number of  edges in the tilde-Fibonacci cube,
in comparison to the total number in the tilde-hypercube.


%
\end{abstract}

{
{\bf Keywords:} {Swap and mismatch distance, Isometric words, Hypercube.}
}
\section{Introduction}\label{s-intro} 
 The $n$-dimensional {\em hypercube}, $Q_n$, encloses all the binary strings of length $n$ and hence it is a model that deserves a starring role in graph theory.   It is defined as a graph whose vertices are in correspondence with the $2^n$ words of length $n$  and there is an edge between two vertices if the corresponding words differ in one position, that is if their {\em Hamming distance} is 1. Hence, the distance between  two vertices in the graph  is equal to the Hamming distance of the corresponding words. 
During the years, the notion of hypercube has been extensively investigated (see \cite{HararyHW88} for a survey). Hypercubes are  used for designing interconnection networks and they found  applications also in theoretical chemistry (see  \cite{K13} for a survey). However, hypercubes have a critical limitation due to the fact that they have an exponential number of vertices. For this, various mo\-di\-fi\-cations have been proposed by considering subgraphs that are {\em isometric}, that is the distance of any pair of vertices in such subgraphs is the same as the distance in the complete hypercube.  With this aim, in 1993, Hsu
 introduced the {\em Fibonacci cubes} \cite{Hsu93}.  They are isometric subgraphs of $Q_n$ obtained by selecting only the vertices whose  corresponding words   do not contain $11$ as factor. They have many remarkable properties also related to Fibonacci numbers.

Generalized Fibonacci cubes $Q_n(f)$  were introduced in 2012 as the subgraphs of $Q_n$ 
keeping only
vertices associated to binary words that do not contain $f$ as a factor, i.e. $f$-free binary words \cite{IlickKR2012}.
Note that, in order to get an isometric subgraph of $Q_n$, the avoided word should satisfy some special conditions; if this is the case, then the word is said isometric. Indeed, 
a binary word $f$ is {\em isometric} (or Ham-isometric) when, for any $n\geq 1$,
$Q_n(f)$ can be isometrically embedded into $Q_n$, and {\em non-isometric}, otherwise  \cite{KlavzarS12}. 
The structure of binary Ham-isometric words has been characterized in \cite{IlicK12,KlavzarS12,Wei17,WeiYZ19,Wei16} and 
the research on the topic is still very active \cite{Azarija16,Wei21,WeiYW20}.

Recently, binary Ham-isometric words have been considered in the two-dimensional setting, and Ham-non-isometric pictures (also called \emph{bad pictures}) have been investigated \cite{AGMS-DCFS20}.
Moreover, the notion of isometric word has been extended to the case of alphabets of size $k$, with $k >2$, by considering  $k$-ary $n$-cubes, $Q^k_n$, and $k$-ary $n$-cubes avoiding a word $f$, $Q^k_n(f)$. 
In this setting, the distance between two vertices is no longer their Hamming distance, but their Lee distance. Taking into account this distance, Lee-isometric $k$-ary words have been introduced, studied and characterized \cite{AFMWords21,MMM22,AnselmoFM22}. 
Using the characterizations of Ham- and Lee-isometric words, in \cite{MMM22,BealC22}, some linear-time algorithms
are provided in order to check whether a word is  isometric and to give some interesting information on non-isometric words.
Worthily, 
Ham- and Lee-isometric words can be defined and studied by ignoring hypercubes and adopting a point of view closer to combinatorics on words. Actually, a 
word $f$ is Ham- (Lee-, resp.) isometric, if for any pair of $f$-free words $u$ and $v$ of the same length, $u$ can be transformed in $v$ by a sequence of $f$-free words, starting with $u$ and ending with $v$, such that the sequence has length equal to the Hamming (Lee, resp.) distance between $u$ and $v$ and every two consecutive words in the sequence have Hamming (Lee, resp.) distance equal to $1$.

In some applications coming from computational biology,  it seems natural to consider  the {\em swap} operation of exchanging two adjacent different symbols in a word. Then, an edit  distance based on swap and mismatch errors seems worth considering \cite{AmirEP06,FaroP18}. In \cite{donz1} this distance is referred to as   {\em tilde-distance}, since the $\sim$ symbol somehow evokes the swap operation. Tilde-isometric words have been defined using the tilde-distance, in place of Hamming or Lee distance, and
 studied from a combinatorial point of view.

In this paper, the tilde-distance serves as the base to define  the  {\em tilde-hypercube}, $\tilde Q_n$; it has again all the $n$-binary strings as vertices, but the edges correspond to tilde-distance equal to 1.  This implies that $\tilde{Q}_n$ has more edges than $Q_n$; in particular, since a swap corresponds to two mismatches, some vertices having distance $2$ in $Q_n$,  become adjacent in $\Tilde{Q}_n$. 
 We give a recursive construction of tilde-hypercubes and enumerate the number of their edges. Then, we consider  subgraphs of the tilde-hypercubes $\tilde Q_n(f)$ by selecting the vertices corresponding $f$-free words, for a given word $f$. 
 It is easy to show that $f$ is {\em tilde-isometric} if and only if $\tilde Q_n(f)$ is an isometric subgraph of $\tilde Q_n$. 
We present an infinite family of tilde-isometric words that are not Hamming isometric.  The last part of the paper is devoted to select special words $f$. 
 For what concern the word $f=11$, that is both Hamming- and tilde-isometric, the subgraph $\tilde Q_n(11)$ is referred to as  the {\em tilde-Fibonacci cube}. We present a recursive construction for it and we compare it with the classic Fibonacci cube.  We show that
the number of edges in the tilde-Fibonacci cube is about 1/7 less than the number of
edges in the whole tilde-hypercube. We also examine  $Q_n(1010)$, where $1010$ is a tilde-non-isometric but Ham-isometric word and $\tilde Q_n(11100)$, where $11100$ is Ham-non-isometric and tilde-isometric word. 
The paper ends with a small table comparing vertices and edges cardinality and ratio of $Q_n(1010)$ and $\tilde Q_n(11100)$ of order n = 4, . . . , 16.

\section{Preliminaries}

 In this paper we only focus on the binary alphabet $\Sigma=\{0, 1\}$.
A word (or string) $w$ of length $|w|=n$, is $w=a_1a_2\cdots a_n$, 
where $a_1, a_2, \ldots, a_n$ are symbols in $\Sigma$. The set of all words over $\Sigma$ is denoted $\Sigma^*$.
Finally, $\epsilon$ denotes the {\em empty word} and $\Sigma^+=\Sigma^* - \{\epsilon\}.$ 
For any word $w=a_1a_2\cdots a_n$, the {\em reverse} of $w$ is the word $w^{rev}=a_na_{n-1}\cdots a_1$. If $x\in\Sigma$, $\overline{x}$ denotes the opposite of $x$, i.e $\overline{x}=1$ if $x=0$ and viceversa. 
Then we define {\em complement} of $w$ the word $\overline{w}=\overline{a}_1\overline{a}_2\cdots \overline{a}_n$.

Let 
  $w[i]$ denote the symbol of $w$ in position $i$, i.e. $w[i]=a_i$.
  Then, $w[i .. j] = a_i \ldots a_j$,
   for $1\leq i\leq j\leq n$, denotes a \emph{factor}  of $w$. 
  The \emph{prefix}  (resp. \emph{suffix}) of $w$ of length $l$, with $1 \leq l \leq n-1$ is ${\rm pre}_l(w) = w[1 .. l]$  (resp. ${\rm suf}_l (w) = w[n-l+1 .. n]$).
  When ${\rm pre}_l(w) = {\rm suf}_l (w)=u$ then $u$ is here referred to as an \emph{overlap} of $w$ of length $l$; 
  in other frameworks, it is also called border, or bifix. A word $w$ is said {\em $f$-free} if $w$ does not contain $f$ as a factor.

An {\em edit operation} is a function $O: \Sigma^* \to \Sigma^*$ that transforms a word into another one. 

Let $OP$ be a {\em set of edit operations}. The {\em edit distance} of  words $u, v \in\Sigma^*$ 
is the minimum number of edit operations in $OP$ needed to transform $u$ into $v$. 
 
In this paper, we consider the edit distance that uses only {\em swap} and {\em replacement} operations to fix {\em swap} and {\em mismatch} errors. Note that these  operations preserve  the length of the word. 
\begin{definition}
    Let $w=a_1a_2\ldots a_n$ be a word over $\Sigma$.
    \\The {\em replacement operation} (or {\em replacement}, for short) on $w$ at position $i$ 
    
is defined by
   
   $$R_{i}(a_1a_2\ldots a_{i-1}\bm{a_i}a_{i+1}\ldots a_n)=a_1a_2\ldots a_{i-1} {\bm{\overline{{a}}_i}} a_{i+1}\ldots a_n.$$
   
    \noindent The {\em swap operation} (or {\em swap}, for short) on $w$ at position $i$ 
 
   with $a_i \neq a_{i+1}$, is defined by
    $$S_i(a_1a_2\ldots a_{i-1}\bm{a_ia_{i+1}}a_{i+2}\ldots a_n)=a_1a_2\ldots a_{i-1} \bm{ a_{i+1}a_i}a_{i+2} \ldots a_n.$$
    
     \noindent 
    
\end{definition}

Note  that one swap corresponds to the replacement of two consecutive symbols.

The {\em Hamming distance} ${\rm dist}_H(u,v)$ of $u,v\in \Sigma^*$ is defined as the minimum number of replacements needed to get $v$ from $u$.
A word $f$ is {\em Ham-isometric} if for any pair of $f$-free words $u$ and $v$, there exists a 
sequence of replacements  of length ${\rm dist}_H(u,v)$  that transforms $u$ into $v$ where all the intermediate words are also $f$-free. 

A word $w$ has a {\em $2$-error overlap} if  there exists $l$ such that 
${\rm pre}_l (w)$ and ${\rm suf}_l (w)$ have Hamming distance $2$ (cf. \cite{Wei17}). Then, it is proved  
the following characterization of Ham-isometric words. 

\begin{proposition}[\cite{Wei17}]\label{p-Hamiso}
   A word $f$ is Ham-isometric if and only if $f$ has a 2-error overlap.
\end{proposition}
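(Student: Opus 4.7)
The plan is to prove each implication of the biconditional separately, following the style of Wei's original argument.

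For the implication \emph{$f$ has a 2-error overlap $\Rightarrow$ $f$ is Ham-isometric}, I would fix a length $l$ witnessing the 2-error overlap, so that $\mathrm{pre}_l(f)$ and $\mathrm{suf}_l(f)$ differ in exactly two positions. Let $u, v$ be any pair of $f$-free words of the same length with $\dist_H(u, v) = d$. Proceeding by induction on $d$, I would show that at least one of the $d$ differing positions can be flipped to produce an $f$-free intermediate word. The key combinatorial observation is that, if \emph{every} such single-bit flip in $u$ were to create an occurrence of $f$, the resulting pairs of overlapping $f$-occurrences would impose very restrictive alignment conditions on $f$; the 2-error overlap at length $l$ would then provide an escape, allowing one to exhibit a safe flip among the $d$ candidates. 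Applying the inductive hypothesis to the shorter-distance pair then yields a length-$d$ $f$-free flip sequence from $u$ to $v$.

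For the converse, \emph{$f$ is Ham-isometric $\Rightarrow$ $f$ has a 2-error overlap}, I would argue by contrapositive: assume no length $l$ produces a prefix--suffix pair at Hamming distance~$2$. I would then construct explicit $f$-free words $u, v$, obtained by carefully placing two slightly perturbed near-copies of $f$ against one another, whose Hamming distance $d$ can only be realised by a flip sequence that at some step must introduce $f$ as a factor. The absence of a 2-error overlap is precisely what forces every length-$d$ transformation between $u$ and $v$ to pass through a word containing $f$, contradicting Ham-isometry.

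The main obstacle, I expect, lies in the first direction, where one must rule out the pathological situation that \emph{all} $d$ possible first flips create an occurrence of $f$. The casework involves understanding how two freshly created $f$-occurrences at different positions can simultaneously overlap the same context, and showing that this forces the kind of overlap structure that the $l$-length 2-error overlap of $f$ was supposed to provide elsewhere. Carrying out this alignment analysis cleanly, so that the induction goes through for all lengths and all $f$-free pairs, is the delicate technical point.
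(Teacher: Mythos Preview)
The paper does not actually prove this proposition; it is quoted from \cite{Wei17} without proof. More importantly, the statement as printed contains a typo: the correct characterization (and the one the paper \emph{uses}, e.g.\ in Example~\ref{e-1010} and Proposition~\ref{p-short-til-iso}) is that $f$ is Ham-\emph{non}-isometric if and only if $f$ has a $2$-error overlap, equivalently, $f$ is Ham-isometric if and only if $f$ has \emph{no} $2$-error overlap. For instance, $1010$ has no $2$-error overlap and is Ham-isometric, while $11100$ has a $2$-error overlap (at length~$2$) and is Ham-non-isometric; both are consistent with the corrected statement and contradict the one you are trying to prove.

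Because you took the printed biconditional at face value, your two implications are each attached to the wrong method. The inductive ``safe flip'' argument you describe for ``$2$-error overlap $\Rightarrow$ Ham-isometric'' is in fact the proof of ``\emph{no} $2$-error overlap $\Rightarrow$ Ham-isometric'': one shows that if \emph{every} single flip in $u$ toward $v$ created an occurrence of $f$, then two of these forced occurrences would overlap so as to exhibit a $2$-error overlap of $f$, contradicting the hypothesis; hence some flip is safe and the induction proceeds. The $2$-error overlap is not an ``escape'' that helps you---it is the contradiction you derive. Conversely, your explicit bad-pair construction for the other direction is precisely how one proves ``$2$-error overlap $\Rightarrow$ Ham-non-isometric'': the overlap at length $l$ lets you align two perturbed copies of $f$ to build $f$-free $u,v$ at Hamming distance~$2$ whose only intermediate words both contain $f$. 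So the ingredients in your sketch are the right ones, but the logical roles of ``has a $2$-error overlap'' and ``has none'' must be swapped throughout.
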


Let $G$ be a graph,  $V(G)$ be the set of its nodes and $E(G)$ be the set of its  edges. The distance of 
$u,v\in V(G)$, $\dist_G(u,v)$, is the length of the shortest path connecting $u$ and $v$ in $G$. 
The {\em diameter} of $G$, denoted by $d(G)$, is the maximum distance of two vertices in $G$.
A subgraph $S$ of a (connected) graph $G$ is an {\em isometric subgraph} if for
any $u,v \in V (S)$, $\dist_S(u,v)=\dist_G(u,v)$.

Let us recall the notion of hypercube and Fibonacci cube, related to the Hamming distance. 
The {\em $n$-hypercube}, or binary $n$-cube, $Q_n$,
is a graph with $2^n$ vertices, each associated to a binary word of length $n$. The vertices are often identified with  the associated word.
Two vertices $u$ and $v$ 
in $Q_n$ are adjacent when their associated words differ in exactly $1$ position, i.e. when 
$\dist_H(u,v)=1$.
Therefore, $\dist_{Q_n}(u,v)=\dist_H(u,v)$. 

Denote by $f_n$ the $n$-th Fibonacci number, defined by $f_1=1, f_2=1$ and $f_i=f_{i-1}+f_{i-2}$, for $i\geq 3$.
The {\em Fibonacci cube} $F_n$ of order $n$ is the subgraph of $Q_n$ whose vertices are binary words of length $n$ avoiding the factor $11$. It is well known that $F_n$ is an isometric subgraph of $Q_n$ (cf. \cite{K13}). Isometric subgraphs of hypercubes are also called {\em partial cubes}.

One of the main properties of $Q_n$ and $F_n$ is their {\em recursive structure} that have been extensively studied (cf. \cite{Hsu93}, \cite{MunariniS02} and \cite{K13}).

The following results are well-known, but are hereby stated for future reference. 

\begin{proposition}\label{p-vert-edge-hyper}
 Let $Q_n$ be the hypercube of order $n$ and $F_n$ be the Fibonacci cube.
Then
\begin{itemize}
    \item $\vert V(Q_n)\vert =2^n$ and 
 $\vert E(Q_n)\vert =n2^{n-1}$
\item $\vert V(F_n)\vert =f_{n+2}$
 \item $\vert E(F_1)\vert =1 $, $\vert E(F_2)\vert =2$ and 
$\vert E(F_n)\vert =\vert E(F_{n-1})\vert + \vert E(F_{n-2})\vert +f_{n}, \forall n >2$ 
$$\vert E(F_n)\vert =\frac{2(n+1)f_n+nf_{n+1}}{5}$$ 
\end{itemize}
 \end{proposition}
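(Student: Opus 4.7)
The plan is to prove the four clauses in order, from the most routine to the most delicate, relying only on the definitions of $Q_n$, $F_n$, and the Fibonacci numbers $f_n$.

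For the first two items, I would just unwind the definitions. Since $V(Q_n)$ is exactly $\Sigma^n$ with $\Sigma=\{0,1\}$, we get $|V(Q_n)|=2^n$. For $|E(Q_n)|$, I would apply the handshake lemma: every vertex $w\in\Sigma^n$ has exactly $n$ neighbors (one per bit flipped), so $2|E(Q_n)|=n\cdot 2^n$, hence $|E(Q_n)|=n2^{n-1}$.

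For $|V(F_n)|=f_{n+2}$, I would split the $11$-free words of length $n$ according to their first letter: those starting with $0$ are in bijection with $11$-free words of length $n-1$, while those starting with $1$ must continue with $0$ and then an arbitrary $11$-free suffix of length $n-2$. This gives the recurrence $|V(F_n)|=|V(F_{n-1})|+|V(F_{n-2})|$, which with base cases $|V(F_1)|=2=f_3$ and $|V(F_2)|=3=f_4$ yields $|V(F_n)|=f_{n+2}$.

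For the edge recurrence, I would reuse the same partition. Edges joining two vertices both starting with $0$ form a copy of $F_{n-1}$, contributing $|E(F_{n-1})|$; edges joining two vertices both starting with $1$ lie in the subgraph of words of the form $10w$ with $w$ $11$-free, a copy of $F_{n-2}$, contributing $|E(F_{n-2})|$. The only remaining edges connect $0w$ to $1w$ with $w\in\Sigma^{n-1}$ at Hamming distance $1$; to keep both endpoints $11$-free, $w$ must itself be $11$-free and start with $0$, and such $w$ are in bijection with $V(F_{n-2})$ (by deleting the leading $0$), so there are exactly $f_n$ cross edges. The base cases $|E(F_1)|=1$ and $|E(F_2)|=2$ are direct.

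For the closed form, I would argue by induction on $n$. The base cases $n=1,2$ follow by a direct computation. For the induction step, I would substitute the formulas for $|E(F_{n-1})|$ and $|E(F_{n-2})|$ into the recurrence and rewrite $f_{n-2}=f_n-f_{n-1}$ and $f_{n+1}=f_n+f_{n-1}$ to express both the claimed value and the recurrence output as linear combinations of $f_n$ and $f_{n-1}$; a short comparison of coefficients (both sides turning out to be $\tfrac{1}{5}((3n+2)f_n+nf_{n-1})$) finishes the proof. The only place where something could go mildly wrong is in this final algebraic check, but since it reduces to matching two scalars in a basis $\{f_n,f_{n-1}\}$, it poses no real obstacle.
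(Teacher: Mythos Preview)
Your proof is correct and follows the standard argument. Note, however, that the paper does not actually prove this proposition: it is stated as a well-known result (``hereby stated for future reference'') with implicit attribution to the references on the recursive structure of $Q_n$ and $F_n$, so there is no paper proof to compare against beyond observing that your decomposition by first letter is exactly the classical one underlying those references.
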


The sequence $|E(F_n)|$ is Sequence A001629 in \cite{Sloane}. Hence, the number of edges of a Fibonacci cube with $N$ vertices is $O(N\log N)$, asymptotically equal to the number of edges of a hypercube with the same number of vertices.

\section{Tilde-isometric words}
In this section, we consider the edit distance based on swap and replacement operations used to fix swap and mismatch errors between two words. It is called tilde-distance and denoted by $dist_\sim$. We  recall the definition of tilde-isometric words given in \cite{donz1} and then present a family of tilde-isometric words.

\begin{definition}\label{def:d-tilde-distance}
	Let $u, v \in \Sigma^*$ be words of equal length.
	The {\em tilde-distance} ${\rm dist}_\sim(u,v)$ between $u$ and $v$ is the minimum number of replacements and swaps needed to transform $u$ into $v$.
	\end{definition}
\begin{definition}\label{def:tilde-transformation}
  	Let $u, v \in \Sigma^
   *$ be words of equal length. 
   \\
  	A \emph{tilde-transformation} $\tau$
  	of length $h$ from $u$ to $v$ is a sequence of words $(w_0, w_1, \ldots, w_h)$ such that
  	$w_0=u$, $w_h=v$, and for any $k=0, 1, \ldots ,h-1$,
  	${\rm dist}_\sim(w_k,w_{k+1})=1$.  Further, $\tau$ is 
   {\em $f$-free} if for any $i = 0,1, \ldots , h$,  word $w_i$ is $f$-free. 
It
is {\em minimal} if its length is equal to ${\rm dist}_{\sim}(u,v)$ and characters in each position are modified at most once.   
  \end{definition}
    A tilde-transformation $(w_0, w_1, \ldots, w_h)$ from $u$ to $v$  {is associated to a} sequence of $h$ operations $(O_{i_1}, O_{i_2},\ldots O_{i_h})$  such that, for any $k=1, \ldots ,h$,  $O_{i_k} \in \{R_{i_k},S_{i_k}\}$ and $w_{k}=O_{i_k}(w_{k-1})$; 
 it can be represented as follows: 
 $$u=w_0\xrightarrow{O_{i_1}}w_1 \xrightarrow{O_{i_2}} \cdots \xrightarrow{O_{i_h}}w_h=v.$$
 With a little abuse of notation,  in the sequel we will refer to a tilde-transformation both  as a sequence of words and as a sequence of operations. 
Let us give some examples.
\begin{example}\label{ex:tilde-trans} 
Let  $u=1011, v=0110$. Below, two different tilde-transformations from $u$ to $v$ are shown.  Note that the length of $\tau_1$ corresponds to ${\rm dist}_\sim(u,v)=2$.

$$\tau_1: 1011\xrightarrow{S_1}0111 \xrightarrow{R_4}0110\hspace{0.8cm}\tau_2:1011 \xrightarrow{R_1} 0011 \xrightarrow{R_2}0111 \xrightarrow{R_4}0110$$
Furthermore, consider the  following tilde-transformations of $u'=100$ into $v'=001$:
$$\tau'_1:100\xrightarrow{S_1}010 \xrightarrow{S_2}001 \hspace{1cm}\tau'_2:100 \xrightarrow{R_1} 000  \xrightarrow{R_3}001$$
 Note that both $\tau'_1$ and $\tau'_2$ have the same length equal to ${\rm dist}_\sim(u',v')=2$ and that, in $\tau'_1$ the symbol in position 2 is changed twice.
\end{example}

In \cite{donz1} it is proved that a minimal tilde-transformation always exists in the binary case. 
Let us now define isometric words based on the tilde 
distance.

\begin{definition}\label{d-tilde-iso}
	Let $f\in\Sigma^*$ be a word of length $n$ with $n\geq 1$. The word $f$ is \emph{tilde-isometric} if for any pair of $f$-free words $u$ and $v$ of equal length $m \geq n$, there exists a minimal tilde-transformation from $u$ to $v$ that is $f$-free.
    It is \emph{tilde-non-isometric} if it is not tilde-isometric. 
\end{definition}

In order to prove that a word is tilde-non-isometric it is sufficient to exhibit a pair $(u,v)$ of words contradicting  Definition \ref{d-tilde-iso}. More challenging is to prove that a word is tilde-isometric.

\begin{example}\label{e-1010}
The word $f=1010$ is tilde-non-isometric. In fact, let  $u=11000$ and $v=10110$; $u$ and $v$ are $f$-free; moreover the only possible minimal tilde-transformations from $u$ to $v$ are
$11000 \xrightarrow{S_2} 10100 \xrightarrow{R_4} 10110$ and $11000 \xrightarrow{R_4} 11010 \xrightarrow{S_2} 10110$, and in both cases $1010$ appears as factor after the first step.
On the other side, observe that $f$ is Ham-isometric by Proposition \ref{p-Hamiso}.
\end{example}

\begin{remark}\label{r-ham-replacements}
When a tilde-transformation contains a swap and a replacement that are adjacent, there could exist   minimal tilde-transformations that involve different sets of operations. 
For instance, the pair $(u,v)$, with $u=010$ and $v=101$, has two minimal tilde-transformations:
$010 \xrightarrow{S_1} 100 \xrightarrow{R_{3}} 101$ and $010 \xrightarrow{S_2} 001 \xrightarrow{R_{1}} 101$.

 This fact never occurs when only replacements are allowed and thus it constitutes a new difficulty, with respect to the Hamming distance case, to prove the isometricity. 
\end{remark}

Let us highlight the following straightforward property of tilde-isometric binary words that is very helpful to simplify proofs.

\begin{remark}\label{r-rev-comp}
 A word $f$ is tilde-isometric iff $\overline{f}$ is tilde-isometric iff $f^{rev}$
   is tilde-isometric.
\end{remark}
In view of Remark \ref{r-rev-comp} , we will focus on words starting with 1.
The following proposition explicitly explores the tilde-isometricity for all words of length $2$, $3$ and $4$. 
\begin{proposition}\label{p-words-iso}
The following statements hold.
\begin{enumerate}
\item All words of length $2$ are tilde-isometric
\item All words of length $3$, except for $101$ and $010$, are tilde-isometric
\item The words $1111$, $1110$, $1000$, $0000$, $0001$, $0111$ are tilde-isometric. All the other words of length $4$ are tilde-non-isometric.
\end{enumerate}
\end{proposition}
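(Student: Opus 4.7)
The plan is to prove each of the three items by direct case analysis, using Remark~\ref{r-rev-comp} to cut the number of cases in half: it suffices to examine words starting with $1$, namely $10$ and $11$ for length~$2$, the four words $100,101,110,111$ for length~$3$, and the eight words $1000,1001,\ldots,1111$ starting with $1$ for length~$4$. The argument then splits naturally into two complementary tasks: exhibit an explicit counterexample pair $(u,v)$ for each tilde-non-isometric candidate, and prove tilde-isometricity directly for each remaining word.

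\textbf{Non-isometric cases.} For each word $f$ claimed tilde-non-isometric (namely $101$ from length~$3$ and $1001,1010,1011,1100,1101$ from length~$4$), it is enough by Definition~\ref{d-tilde-iso} to exhibit a pair $(u,v)$ of $f$-free words of equal length such that every minimal tilde-transformation from $u$ to $v$ creates an $f$-factor. Example~\ref{e-1010} already settles $f=1010$. For $f=101$ one can take $u=1001$ and $v=1111$: the tilde-distance equals $2$, no swap is applicable since the two differing positions both carry $0$ in $u$, and each of the two possible orderings of replacements passes through $1101$ or $1011$, which contain $101$. Analogous short witnesses, arranged so that the two differing positions sit inside a potential $f$-window and so that any candidate swap is blocked by the surrounding bits, handle $f=1001,1011,1100,1101$; the $2$-error overlap structure of these words (cf.\ Proposition~\ref{p-Hamiso}) is a natural guide for placing the differences.

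\textbf{Isometric cases.} For each remaining word $f$ we have to show that for every pair $(u,v)$ of $f$-free words of common length $m\geq |f|$, some minimal tilde-transformation stays inside the set of $f$-free words. For length~$2$, the $f$-free words have a very simple description: if $f=10$, then every $f$-free word is of the form $0^a1^b$, and transforming $0^a1^b$ into $0^c1^d$ only requires replacements in the boundary region, none of which introduces a $10$; if $f=11$, the $f$-free words are those without two consecutive ones, and operations can be scheduled to avoid ever producing adjacent $1$s. For length~$3$ (words $100,110,111$) and length~$4$ (words $1000,1110,1111$) a similar block-by-block argument applies, but now we must also handle minimal tilde-transformations that use a swap in place of two replacements, cf.\ Remark~\ref{r-ham-replacements}; the strategy is to show that whenever the natural transformation would cross $f$, a local modification (reordering two adjacent operations, or substituting a swap for two replacements or conversely) avoids it, exploiting the strong structural bias of these $f$ toward the symbol $1$ or toward the symbol $0$.

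\textbf{Main obstacle.} The principal obstacle is the length-$4$ isometric case, in particular $f=1000$ (and symmetrically $0111$, $1110$, $0001$). Unlike the Hamming setting, tilde-minimal transformations may use swaps, and the interplay between swaps and replacements can in principle produce an $f$-factor that a purely-replacement schedule would avoid, or vice versa. We therefore cannot directly inherit the Ham-isometric argument; instead we must verify, for each local configuration surrounding a pair of mismatch positions of $u$ and $v$, that at least one choice of operation order keeps every intermediate word $f$-free. Once this local analysis is in place, it glues across the disjoint difference blocks of $u$ and $v$ and yields the desired $f$-free minimal tilde-transformation.
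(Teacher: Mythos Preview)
Your treatment of the non-isometric cases matches the paper: give an explicit witnessing pair for each of $101,1001,1010,1011,1100,1101$ (the paper lists its witnesses explicitly, e.g.\ $(110100,101010)$ for $1100$, while you only describe how to build them, but the idea is the same).

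For the isometric cases the paper takes a different and much shorter route.  It does \emph{not} argue case by case for $10,11,100,110,111,1000,1110,1111$.  Instead it observes that, up to reverse and complement, every word on these lists is of the form $1^h0^k$ and simply invokes Proposition~\ref{p-0h1k}, which proves in one stroke that every $f_{h,k}=1^h0^k$ with $(h,k)\neq(2,2)$ is tilde-isometric.  So the paper's proof of the present proposition is literally: counterexamples for the bad words, forward reference to Proposition~\ref{p-0h1k} for the good ones.  What this buys is that the genuinely delicate analysis (the interaction of a swap and a replacement inside two overlapping occurrences of $f$) is done once, uniformly in $h,k$, rather than repeated for each small word.

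Your direct plan is not wrong, but the ``main obstacle'' you identify, $f=1000$, is exactly the content of Proposition~\ref{p-0h1k} specialised to $(h,k)=(1,3)$: the local-configuration check you sketch (``some order of the two operations avoids $f$'') is precisely the pigeonhole-plus-overlap argument carried out there.  So you would end up reproving that proposition piecemeal.  One small imprecision to fix if you pursue your route: for $f=10$ it is not true that ``none of [the replacements] introduces a $10$'' independently of order; from $0011$ to $1111$, replacing position $1$ first yields $1011$, which contains $10$.  The order of replacements (right to left) already matters in this simplest case.
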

\begin{proof}
Consider the different cases:
\begin{enumerate}
  \item 
   If $|f|=2$, two cases arise, up to reverse and complement: $f=10$ and $f=11$. They are both tilde-isometric words because of next Proposition \ref{p-0h1k}.

     \item If $|f|=3$, three cases arise, up to complement and reverse. 
    
     \begin{itemize}
         \item $f=111$ and $f=100$, are tilde-isometric because of Proposition \ref{p-0h1k}.
         \item $f=101$, is tilde-non-isometric. In fact, $u=1111$ and $v=1001$ contradict isometricity of $f$, since they are $f$-free, and all the minimal transformations of $u$ into $v$ need to change $u$ into a word that has $101$ as a factor.
     \end{itemize}

     \item If $|f|=4$, the following cases  arise, up to complement and reverse.
    
    \begin{itemize}
     \item $f=1111$ and $f=1110$ are tilde-isometric because of Proposition \ref{p-0h1k}.

     \item $f=1100$. Then $u=110100$ and $v=101010$ contradict isometricity of $f$.

     \item $f=1001$. Then $u=11011$ and $v=10001$ contradict isometricity of $f$.

     \item $f=1010$ is tilde-non-isometric (see Example \ref{e-1010}).

     \item $f=1011$. Then $u=11111$ and $v=10011$ contradict isometricity of $f$.    
\end{itemize} 
    \end{enumerate}\end{proof}

Let us show an infinite family of words that are tilde-isometric, but not Ham-isometric, by Proposition \ref{p-Hamiso}.

\begin{proposition}\label{p-0h1k}
 Let 
 $f_{h,k}=1^h0^k$, 
 with $h, k \geq 0$.
 Then,  $f_{h,k}$ is tilde-isometric for any $h, k \geq 0$, except for $h=k=2$, i.e.,  $f_{2,2}=1100$ is tilde-non-isometric.
 \end{proposition}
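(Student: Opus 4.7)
The plan is to proceed by induction on the tilde-distance $d = \text{dist}_\sim(u,v)$, after reducing via Remark \ref{r-rev-comp}: since $f_{h,k}$ and $f_{k,h}$ are complement-reverses of each other, we may assume $h \geq 1$, with the case $h = 0$ following automatically from the case $k = 0$. The exceptional $(h,k) = (2,2)$ and the small instances with $h + k \leq 3$ together with $\{1111, 1110\}$ are already settled in Proposition \ref{p-words-iso}, so the work concerns $h \geq 1$, $k \geq 0$ with $h + k \geq 4$ and $(h,k) \neq (2,2)$. The base cases $d \leq 1$ are trivial because the only intermediate words are $u$ and $v$, both $f$-free.

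For the inductive step with $d \geq 2$, pick any minimal tilde-transformation $\tau = (u = w_0, w_1, \ldots, w_d = v)$. If $w_1$ is $f$-free, the induction hypothesis applied to $(w_1, v)$ completes the argument; otherwise $w_1$ contains an occurrence of $f = 1^h 0^k$ at some window $I = [p, p+h+k-1]$. Since $u$ is $f$-free, the operation $O$ that produced $w_1$ must modify at least one position of $I$ (or the position immediately outside, in case of a swap straddling the boundary of $I$). A direct case analysis on the type of $O$ and on the positions it touches yields exactly five local shapes for $u$: a single $0$ inside the ones-block, shape $1^a 0\, 1^{h-1-a} 0^k$; a single $1$ inside the zeros-block, shape $1^h 0^b\, 1\, 0^{k-1-b}$; the boundary-swap source $1^{h-1}\,0\,1\,0^{k-1}$; and the two just-outside swap patterns $1\,0\,1^{h-1}0^k$ (left) and $1^h 0^{k-1}\,1\,0$ (right). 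For each pattern, the goal is to exhibit an alternative first operation $O'$ acting on the mismatch set $\{i : u[i]\neq v[i]\}$ such that $O'(u)$ is $f$-free and $\text{dist}_\sim(O'(u), v) = d - 1$; induction on the pair $(O'(u), v)$ then closes the step.

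The two replacement patterns are the easiest: since $v[I]$ cannot itself equal $1^h 0^k$, there is a mismatch position in $I$ whose correction can be performed first without creating $f$. The two outside-window swap patterns can be repaired by delaying the offending swap and fixing a neighbouring mismatch first, using that the run just outside $I$ cannot be all of the requisite color in $v$. The main obstacle, and the precise place where the hypothesis $(h,k) \neq (2,2)$ is needed, is the boundary-swap pattern $u[I] = 1^{h-1}\,0\,1\,0^{k-1}$: when $(h,k) = (2,2)$ this is exactly $1010$, and in the symmetric situation of Example \ref{e-1010} one can construct $u, v$ so that every possible repair re-creates $1100$. When $(h,k) \neq (2,2)$, at least one of the runs $1^{h-1}$, $0^{k-1}$ is either empty (because $h = 1$ or $k = 1$) or has length $\geq 2$, and this asymmetry leaves room in the mismatch set for a safe alternative replacement that avoids both the would-be occurrence at $I$ and the creation of any new occurrence elsewhere. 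The explicit counterexample $u = 110100$, $v = 101010$ recalled in Proposition \ref{p-words-iso} confirms sharpness in the excluded case, completing the proof.
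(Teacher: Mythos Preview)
Your strategy—induction on $d=\text{dist}_\sim(u,v)$ with a ``find a safe first move'' repair—is a legitimate alternative to the paper's argument, but the proposal as written has a genuine gap at the decisive point. The paper proceeds differently: it takes a pair $(u,v)$ of minimal tilde-distance among all pairs witnessing non-isometricity, observes that for a fixed minimal transformation \emph{every} operation $O_{i_j}$ applied to $u$ must create an occurrence of $f$ in some interval $I_j$, and then uses a pigeonhole-type argument to find two operations $O_s,O_t$ whose intervals $I_s,I_t$ overlap so that each interval contains a position touched by the other operation. The contradiction is then obtained by a local analysis of the overlap $I_s\cap I_t$, where $(O_s(u))$ shows a suffix of $f$ and $(O_t(u))$ shows a prefix of $f$; the case analysis is on how many of the (up to four) modified positions fall inside the overlap, and the case $f=1100$ is exactly where the analysis fails.

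In your write-up, two things are not established. First, saying that ``there is a mismatch position in $I$ whose correction can be performed first'' presupposes that a replacement at that mismatch reduces the tilde-distance by one. That is false in general: if the mismatch at $j$ is one half of a swap pair (say $u[j..j{+}1]=01$, $v[j..j{+}1]=10$), then $R_j$ leaves the distance unchanged. You would need to argue that the alternative $O'$ is itself the first step of \emph{some} minimal transformation, i.e.\ that it belongs to the operation multiset of one, not merely that it touches the mismatch set. Second, even granting that, you only check that $O'(u)$ avoids $f$ at the window $I$; you do not rule out that $O'$ creates $f$ at a shifted window. For $f=1^h0^k$ a single replacement inside the $0$-block can easily produce a new occurrence of $f$ one step to the right, so this has to be controlled.

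Most importantly, the boundary-swap pattern $u[I]=1^{h-1}01\,0^{k-1}$ is where the exception $(h,k)=(2,2)$ lives, and your treatment of it (``at least one of the runs is empty or of length $\geq 2$, and this asymmetry leaves room for a safe alternative replacement'') is an intuition, not an argument. You have not said which $O'$ to take, why it lies in a minimal transformation, or why it avoids $f$ everywhere. The paper handles precisely this configuration via the overlap analysis: with $O_s=S_{p-1}$ and $O_t=S_q$ and $q=p+1$, the only way both a suffix and a prefix of $f$ can be forced is $f=1100$, and for $q>p+1$ the structure of $1^h0^k$ makes the two constraints incompatible. If you want to keep the inductive framework, this case needs an explicit construction of $O'$ (likely distinguishing $h=1$, $k=1$, $h\geq 3$, $k\geq 3$) together with the two verifications above.
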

\begin{proof}
Suppose that $f=f_{h,k}=1^h0^k$, $f \neq1100$,  is tilde-non-isometric and let $(u, v)$, with $u, v\in \Sigma^m$, be a pair of words contradicting  Definition \ref{d-tilde-iso}, with minimal $d={\rm dist}_\sim(u, v)$  among all such pairs of words with length $m$.

Let $\{O_{i_1}, O_{i_2}, \dots , O_{i_d}\}$ be the set of operations of a minimal tilde-transformation from $u$ to $v$,	
$1\leq i_1< i_2< \dots  < i_d\leq m$, where for any $j=1, 2, \dots, d$, $O_{i_j}\in \{ R_{i_j}, S_{i_j}  \}$. 
Then, for  $j=1, 2, \dots, d$, $O_{i_j}(u)$ has an occurrence of $f$ in an interval, say $I_j$, which contains at least one position modified by $O_{i_j}$. Note that,
 this occurrence of $f$ must disappear in a tilde-transformation from $u$ to $v$, because $v$ is $f$-free. Hence, 
 $I_j$
 contains a position modified by another operation in $\{O_{i_1}, O_{i_2}, \dots , O_{i_d}\}$.
By the pigeonhole principle, there exist $s,t \in \{i_1, i_2, \dots i_d\}$, such that $O_s(u)$ has an occurrence of $f$ in 
$I_s=[k_s .. k_s+n-1]$ that contains at least one position modified by $O_{t}$ and $O_t(u)$ has an occurrence of $f$ in 
$I_t=[k_t .. k_t+n-1]$ that contains at least one position modified by $O_{s}$. Without loss of generality,  suppose that $k_s <k_t$. Now, let $I=[p .. q]$ be the intersection of $I_s$ and $I_t$; this interval intercepts  a suffix of $f$ in $O_s(u)$ and a prefix of $f$ in $O_t(u)$ of  same length $l$, with $l=q-p+1$. In other words,  $(O_{s}(u))[p .. q]=suf_l(f)$ and  $(O_{t}(u))[p .. q]=pref_l(f)$. Note that this implies $f \neq 1^k$.

 The  interval $I$ can contain either four, or three, or two among the positions modified by $O_{s}$ and $O_{t}$, of which at least one is modified by $O_{s}$ and at least one by $O_{t}$.
One can show that a contradiction follows in all cases. We  give details only in some cases that involve swap operations; the other ones can be treated in an analogous way.

Consider the case that $I$ contains four positions modified by $O_{s}$ and $O_{t}$. Therefore, $O_{s}$ and $O_{t}$ are swaps, i.e. $O_{s}=S_s$ and $O_{t}=S_t$, with $s, t \in [p..q]$.
Since $(S_{s}(u))[p .. q]=suf_l(f)$, one has 

$u[p .. q] \in 1^*010^*$. But, then, there exists no other swap operation in $u[p .. q]$ that can give a prefix of $f$, as it should be for $S_t$.

Consider now the case that $I$ contains two positions modified by $O_{s}$ and $O_{t}$. 
Three cases are possible following that $O_{s}$ and $O_{t}$ are both replacement operations, or both swap operations

(with $s=p-1$ and $t =q$), or one is a swap and the other a replacement 
(with $s=p-1$ and $t \in [p+1 .. q]$ or $s \in [p .. q-1]$ and $t=q$).

Let us consider the case they are

both swap operations.
If $O_{s}=S_{p-1}$ and $O_{t}=S_q$ then the two positions modified in $u[p .. q]$ by $O_s$ and $O_t$ must be positions $p$ and $q$.

Suppose $q=p+1$.

If $u[p .. q]=10$ then $u[p-1 .. q+1]=0101$. The application of 
$S_{p-1}$ on $u$ implies that
 $f$ ends with $100$, whereas the application of $S_q$ implies that $f$ begins with $110$. Hence, $f=1100$, against the hypothesis. 
 The application of $S_{p-1}$ on $u$
 in the cases that 
 $u[p .. q]=00, 01, 11$, respectively, 
 would result in a suffix $010$, $011$, $101$ of $f$, and this is a contradiction.
 Suppose now $q > p+1$. 
 
 If $u[p]=1$ then $u[p .. q] \in 10^*$, since $(S_{p-1}(u))[p .. q]=suf_l(f)$, 
 but there exists no other swap operation in $u[p .. q]$ that can give a prefix of $f$, as it should be for $S_q$. An analogous reasoning shows that  $u[p]=0$ cannot hold either. Therefore, also in this case, a contradiction follows.
\end{proof}
The notion of tilde-isometricity is not comparable with the one of Ham-isometricity. Furthermore, the following result holds.

\begin{proposition}\label{p-short-til-iso}
    The word $11100$ is the shortest tilde-isometric word that is not Ham-isometric.
     The word $1010$ is the shortest Ham-isometric word that is not tilde-isometric.
\end{proposition}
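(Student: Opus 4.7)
The plan is to handle the two sentences separately. In each case, the argument has two halves: verify the (tilde- and Ham-) isometry status of the named word, and rule out shorter candidates using the enumeration already provided in Proposition~\ref{p-words-iso}.

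For the first sentence, I would first note that $11100 = 1^3 0^2 = f_{3,2}$, so by Proposition~\ref{p-0h1k} it is tilde-isometric (the sole exception in the $f_{h,k}$ family is $f_{2,2}=1100$). To establish Ham-non-isometricity, I would exhibit a 2-error overlap: with $l=2$ the prefix $11$ and suffix $00$ of $11100$ have Hamming distance $2$, so by the characterization in Proposition~\ref{p-Hamiso} the word is not Ham-isometric. For minimality, Proposition~\ref{p-words-iso} already lists the tilde-isometric binary words of length at most $4$: all of length $2$, all of length $3$ other than $101$ and $010$, and the six words $1111,1110,1000,0000,0001,0111$ of length $4$. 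I would inspect each of these and observe that every proper prefix/suffix pair has Hamming distance at most $1$, so none admits a 2-error overlap and hence all are Ham-isometric.

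For the second sentence, tilde-non-isometricity of $1010$ is exactly Example~\ref{e-1010}. For Ham-isometricity, I would compute the three overlaps directly: $\dist_H({\rm pre}_l(1010),{\rm suf}_l(1010))$ equals $1$, $0$, $3$ for $l=1,2,3$ respectively, so no 2-error overlap exists and Proposition~\ref{p-Hamiso} applies. Minimality then follows once more from Proposition~\ref{p-words-iso}: every binary word of length $2$ is tilde-isometric, and the only tilde-non-isometric words of length $3$ are $101$ and $010$. A one-line check exhibits a 2-error overlap in $101$ (with $l=2$, the prefix $10$ and suffix $01$ have Hamming distance $2$) and, by complement, in $010$; hence both are Ham-non-isometric and neither can witness the claim.

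The main obstacle will be avoiding a long case analysis on length-$4$ words. The cleanest shortcut is to base the argument on the 2-error overlap characterization of Proposition~\ref{p-Hamiso}, rather than constructing, for each candidate, explicit Hamming transformations that succeed or fail; this reduces both halves of the proof to a handful of overlap computations, with the tilde-isometry bookkeeping already packaged by Propositions~\ref{p-0h1k} and~\ref{p-words-iso}.
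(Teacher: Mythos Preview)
Your proposal is correct and follows the same approach as the paper: invoke Proposition~\ref{p-0h1k} and the $2$-error-overlap criterion (Proposition~\ref{p-Hamiso}) for the two named words, and deduce minimality from the enumeration in Proposition~\ref{p-words-iso}. The paper's proof is terser---it simply cites Proposition~\ref{p-words-iso} for minimality without spelling out the overlap checks---so your explicit verification that no tilde-isometric word of length $\le 4$ has a $2$-error overlap, and that $101$ and $010$ do, just fills in what the paper leaves to the reader.
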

\begin{proof}
 The word $11100$ is tilde-isometric (Proposition \ref{p-0h1k}) but Ham-non-isometric (Pro\-po\-si\-tion \ref{p-Hamiso}). On the other hand $1010$ is tilde-non-isometric (Proposition \ref{p-words-iso}), and it is Ham-isometric (Proposition \ref{p-Hamiso}). The minimality of the length of these words comes from Proposition \ref{p-words-iso}.
\end{proof}

\section{The tilde-hypercube} \label{s-tilde-hyper}
Classical hypercubes connect vertices following their Hamming distance, whereas the distance of vertices in a $k$-ary $n$-cube represents their Lee-distance.
This suggests to investigate hypercubes based on other distances. In this paper
we introduce the {\em tilde-hypercube}, whose vertices are the binary words and edges connect vertices with tilde-distance equal to $1$. Then, its recursive structure is explored.

\begin{definition}
The \emph{$n$-tilde-hypercube} ${\tilde{Q}}_n$,
is a graph with $2^n$ vertices, each associated to a binary word of length $n$.
Two vertices 
in ${\tilde{Q}}_n$, are adjacent whenever their tilde-distance is $1$.
\end{definition}

\begin{figure}[]
\begin{center}
    \begin{subfigure}{0.4\textwidth}
\includegraphics[width=150pt]{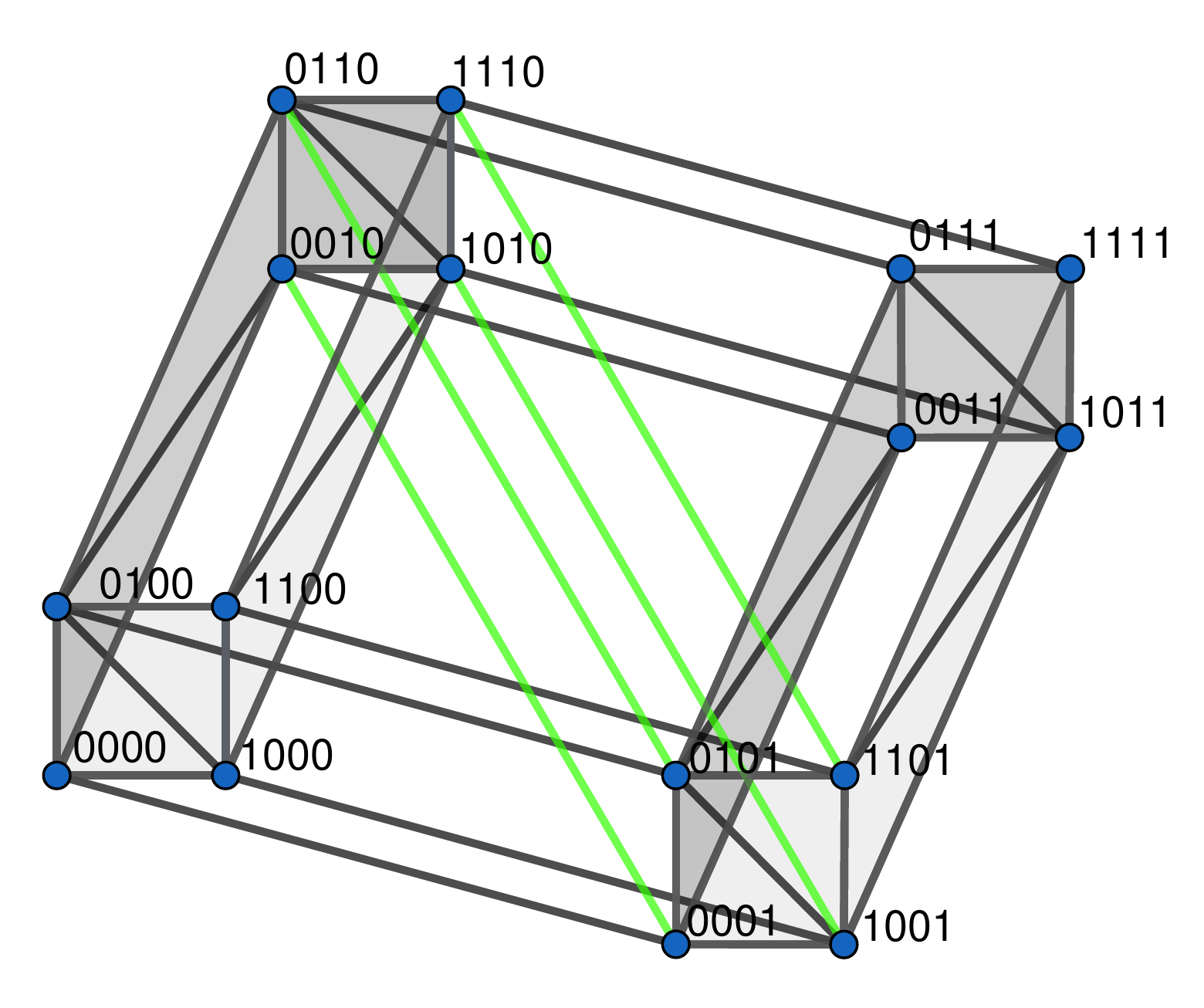}
\subcaption[(a)]{}
\end{subfigure}
\hspace{.4cm}
\begin{subfigure}{0.4\textwidth}
     \includegraphics[width=130pt]{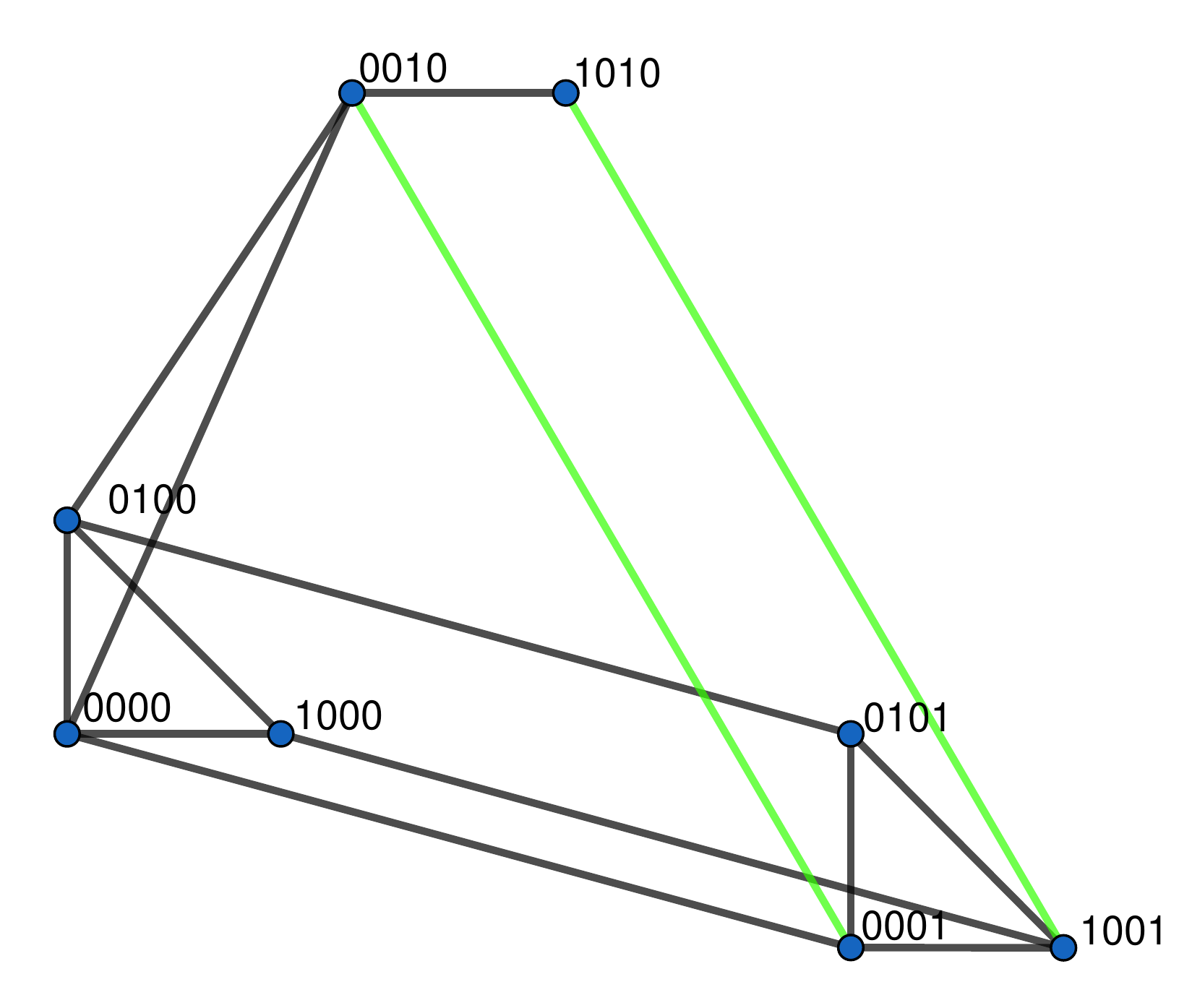} 
     \subcaption[(b)]{}
\end{subfigure}
    \caption{The tilde-hypercube of order $4$ (a)  the tilde-Fibonacci cube of order $4$ (b)}\label{f-overlap}\label{f-tilde-hyper}
\end{center}
\end{figure}

Figure \ref{f-tilde-hyper}(a) shows the tilde-hypercube of order $4$.

\begin{remark}
$Q_n$ is a proper subgraph of ${\tilde{Q}}_n$. In fact for $u,v\in \Sigma^*$, $dist_H(u,v)=1$ implies $dist_\sim(u,v)=1$. 
Further, for any $n\geq 2$, there exist words $u_n, v_n$ of length $n$ such that $dist_\sim(u_n,v_n)=1$ and $dist_H(u_n,v_n)\neq 1$, for example $u_n=0^{n-2}01$ and $v_n=0^{n-2}10$, so that $(u_n,v_n)$ is an edge in ${\tilde{Q}}_n$ but not in $Q_n$.
\end{remark}

The following lemma is the main tool to exhibit a recursive definition of the tilde-hypercube, in analogy with the classical hypercube.

\begin{lemma}\label{l-extension}
For any $u, v \in \Sigma^{n-1}$, $dist_\sim(u0,v0)= dist_\sim(u,v)=dist_\sim(u1,v1)$ and $dist_\sim(u0,u1)= 1$. Moreover for any $u^\prime \in\Sigma^{n-2}$, $dist_\sim(u^\prime01, u^\prime10)=1$.
\end{lemma}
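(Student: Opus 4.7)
The plan is to handle the four assertions in turn, with the bulk of the work going into the first equality; the others follow by symmetry or by exhibiting a single operation.

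First I would prove $\dist_\sim(u0,v0)=\dist_\sim(u,v)$. For the inequality $\dist_\sim(u0,v0)\le\dist_\sim(u,v)$, take any tilde-transformation of minimal length from $u$ to $v$, and apply the same sequence of operations $O_{i_k}\in\{R_{i_k},S_{i_k}\}$ with $i_k\le n-1$ (so $S_{i_k}$ with $i_k\le n-2$) to $u0$. Since no operation touches position $n$, the last symbol remains $0$ throughout, and the sequence transforms $u0$ into $v0$ with the same length. For the reverse inequality $\dist_\sim(u,v)\le\dist_\sim(u0,v0)$, I would invoke the existence of a \emph{minimal} tilde-transformation from $u0$ to $v0$ guaranteed by the result from \cite{donz1} recalled just after Example~\ref{ex:tilde-trans}. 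By Definition~\ref{def:tilde-transformation}, in such a minimal transformation each position is modified at most once. Since position $n$ starts and ends as $0$, it cannot be modified at all (a single modification of a position cannot leave its value unchanged). Consequently, no replacement $R_n$ and no swap $S_{n-1}$ occurs, and every operation in the sequence acts only within positions $1,\ldots,n-1$. Restricting the sequence of words to their first $n-1$ symbols yields a tilde-transformation from $u$ to $v$ of the same length, which proves the inequality.

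The equality $\dist_\sim(u,v)=\dist_\sim(u1,v1)$ is proved by exactly the same argument, using that the last symbol is $1$ in both endpoints and hence cannot be modified in a minimal transformation.

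For $\dist_\sim(u0,u1)=1$, the single replacement $R_n$ transforms $u0$ into $u1$, and since $u0\neq u1$ the distance is at least $1$. For $\dist_\sim(u'01,u'10)=1$, the single swap $S_{n-1}$ is applicable (the symbols at positions $n-1$ and $n$ differ) and transforms $u'01$ into $u'10$, while the two words differ and hence are not at tilde-distance $0$.

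The only delicate point is the use of the word ``minimal'' in the second inequality: one needs the strong form from Definition~\ref{def:tilde-transformation}, where each position is modified at most once, in order to rule out a scenario such as $R_n$ followed by another $R_n$, or a pair of swaps $S_{n-1}$ that together fix position $n$; the cited existence result from \cite{donz1} is exactly what makes this argument go through in the binary case.
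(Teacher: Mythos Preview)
The paper states Lemma~\ref{l-extension} without proof, treating it as an elementary tool; your argument is correct and supplies exactly the missing details. In particular, your use of the strong form of minimality from Definition~\ref{def:tilde-transformation} (each position modified at most once, guaranteed to exist by the result from \cite{donz1}) is the right device for the nontrivial inequality $\dist_\sim(u,v)\le\dist_\sim(u0,v0)$, and your observation that any single operation touching position~$n$ necessarily changes its value is what makes that step go through.
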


\begin{proposition}\label{p-tilde-hyper-rec}
$\tilde{Q}_n$ can be recursively defined.
\end{proposition}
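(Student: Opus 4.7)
The plan is to give an explicit recursive description of $\tilde{Q}_n$ in terms of two disjoint copies of $\tilde{Q}_{n-1}$, glued together by two families of cross edges, and then to verify using Lemma~\ref{l-extension} that this construction reproduces the adjacency relation of $\tilde{Q}_n$. The base case is $\tilde{Q}_1$, the graph on $\{0,1\}$ with a single edge, since $\dist_\sim(0,1)=1$.

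For the inductive step, I would partition the vertex set $\Sigma^n = \Sigma^{n-1}0 \,\cup\, \Sigma^{n-1}1$ into $V_0$ and $V_1$, and consider the two induced subgraphs. By the first identity in Lemma~\ref{l-extension}, the bijection $w0 \mapsto w$ (respectively $w1 \mapsto w$) is an isomorphism from the subgraph induced by $V_0$ (respectively $V_1$) onto $\tilde{Q}_{n-1}$, since $\dist_\sim(u0,v0)=\dist_\sim(u,v)=\dist_\sim(u1,v1)$ for every $u,v\in\Sigma^{n-1}$. So each half is a copy of $\tilde{Q}_{n-1}$, and it remains to describe the cross edges between $V_0$ and $V_1$.

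The core step is characterizing all edges of tilde-distance $1$ between $V_0$ and $V_1$. If $u0$ and $v1$ differ by a single replacement, that replacement must hit position $n$, forcing $u=v$; this yields the family $\{\{u0,u1\} : u\in\Sigma^{n-1}\}$, whose existence is guaranteed by the second identity $\dist_\sim(u0,u1)=1$ of the lemma. If instead $u0$ and $v1$ differ by a single swap $S_i$, then position $n$ is affected, so $i=n-1$; this forces $u=u''1$ and $v=u''0$ for some $u''\in\Sigma^{n-2}$, yielding the family $\{\{u''01,u''10\} : u''\in\Sigma^{n-2}\}$, whose existence is guaranteed by the third identity $\dist_\sim(u'01,u'10)=1$. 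Adding exactly these two families of cross edges to the two copies of $\tilde{Q}_{n-1}$ therefore produces $\tilde{Q}_n$.

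The main obstacle is not the existence of the cross edges, which is handed to us by Lemma~\ref{l-extension}, but the completeness argument that no other cross edges arise: I have to be careful that the swap case cannot be realized by a swap at an interior position $i<n-1$, because such a swap would preserve the last letter and hence keep the vertex inside $V_0$ or $V_1$ rather than crossing. Once this case analysis is made explicit, the proposition follows, and moreover the construction immediately enables an inductive count of $|V(\tilde{Q}_n)|$ and $|E(\tilde{Q}_n)|$ via the recurrences $|V(\tilde{Q}_n)| = 2|V(\tilde{Q}_{n-1})|$ and $|E(\tilde{Q}_n)| = 2|E(\tilde{Q}_{n-1})| + 2^{n-1} + 2^{n-2}$, as a direct corollary to be used in the subsequent sections.
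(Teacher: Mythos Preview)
Your argument is correct and follows essentially the same route as the paper: split $\Sigma^n$ by the last letter into two copies of $\tilde{Q}_{n-1}$ via Lemma~\ref{l-extension}, then add the two families of cross edges $\{u0,u1\}$ and $\{u'10,u'01\}$ and observe that no other cross edges can occur. Your completeness analysis (replacement must hit position $n$, swap must be $S_{n-1}$) is in fact more explicit than the paper's, which simply asserts that any other pair has tilde-distance greater than $1$.
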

\begin{proof}
If $n=1$, $\tilde{Q}_1$ has just two vertices $0$ and $1$ connected by an edge.

Suppose  the tilde-hypercubes of dimension smaller than $n$ have been defined. Consider two copies of $\tilde{Q}_{n-1}$. In the first copy all the vertices $u$ are replaced by $u0$ and in the second by $u1$. By Lemma \ref{l-extension}, if $u$ and $v$ are connected in $\Tilde{Q}_{n-1}$, then $u0$ and $v0$ ($u1$ and $v1$, respectively) are connected in $\Tilde{Q}_n$. Moreover for any $u\in \Sigma^{n-1}$, $u0$ in the first copy and $u1$ in the second copy are linked. Finally, for each vertex of $\tilde Q_{n-1}$ that ends with 1, say $u=u^\prime 1$, there is an edge between $u^\prime 10$ in the first copy of $\Tilde{Q}_n$ and $u^\prime 01$ in the second copy of $\Tilde{Q}_n$ (see green edges in Fig. \ref{f-tilde-hyper}(a)). For any other pair of words $u,v\in \{0,1\}^n$ we have $dist_\sim (u,v)>1$.
\end{proof}


\begin{corollary}\label{c-edges-tilde-hyper}
   Let $\Tilde{Q}_n$ be the tilde-hypercube of order $n$. Then 
   $$\vert E(\Tilde{Q}_n)\vert=2\vert E(\Tilde{Q}_{n-1}) \vert+2^{n-1}+2^{n-2}, \,\,\, \mbox {with} \,\, \vert E(\Tilde{Q}_1) \vert=1$$
\end{corollary}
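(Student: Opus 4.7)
The plan is to proceed by induction on $n$, leaning almost entirely on the recursive decomposition already established in Proposition~\ref{p-tilde-hyper-rec}. The base case is immediate: $\tilde Q_1$ is a single edge between $0$ and $1$, so $|E(\tilde Q_1)|=1$.

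For the inductive step, I would partition the edge set $E(\tilde Q_n)$ according to the last one or two symbols of its endpoints, and argue that every edge falls into exactly one of three classes. First, the edges whose endpoints both end in $0$: by Lemma~\ref{l-extension}, the map $u\mapsto u0$ gives a graph isomorphism from $\tilde Q_{n-1}$ onto the subgraph of $\tilde Q_n$ induced by $\Sigma^{n-1}0$, contributing $|E(\tilde Q_{n-1})|$ edges. Symmetrically, the edges with both endpoints ending in $1$ contribute another $|E(\tilde Q_{n-1})|$. Second, the ``vertical'' edges of the form $(u0,u1)$ with $u\in\Sigma^{n-1}$, whose endpoints have equal prefix: by the last assertion of Lemma~\ref{l-extension} each such pair is at tilde-distance $1$, giving $2^{n-1}$ edges. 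Third, the ``swap'' edges of the form $(u'01,u'10)$ with $u'\in\Sigma^{n-2}$: again Lemma~\ref{l-extension} ensures they sit at tilde-distance $1$, and there are $2^{n-2}$ of them.

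Summing yields
\[
|E(\tilde Q_n)| \;=\; 2\,|E(\tilde Q_{n-1})| + 2^{n-1} + 2^{n-2},
\]
provided the three classes are pairwise disjoint and exhaust $E(\tilde Q_n)$. Disjointness is immediate by inspection of the final two symbols of the endpoints: the first class has both endpoints ending in the same symbol (both $0$ or both $1$), the vertical class has endpoints with the same prefix of length $n-1$ but differing in the last position, and the swap class has endpoints whose last two symbols are $01$ and $10$ respectively; no edge fits two descriptions. Exhaustiveness is already delivered by the final sentence of the proof of Proposition~\ref{p-tilde-hyper-rec}, which states that any pair of binary words of length $n$ not covered by these three constructions satisfies $\dist_\sim(u,v)>1$ and hence is not an edge of $\tilde Q_n$.

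There is no real obstacle here; the only subtlety to keep an eye on is the bookkeeping in the swap class, to make sure each $u'\in\Sigma^{n-2}$ contributes a single edge (the unordered pair $\{u'01,u'10\}$) rather than being double counted, which accounts for the $2^{n-2}$ rather than $2^{n-1}$ in the formula.
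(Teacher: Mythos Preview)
Your argument is correct and follows essentially the same route as the paper: both lean on the recursive construction of Proposition~\ref{p-tilde-hyper-rec}, partition the edges of $\tilde Q_n$ into the two copies of $\tilde Q_{n-1}$, the $2^{n-1}$ edges $(u0,u1)$, and the $2^{n-2}$ swap edges $(u'01,u'10)$. You simply spell out the disjointness and exhaustiveness checks more explicitly than the paper does; as a minor remark, the ``induction'' framing is not really needed, since you are only verifying a recurrence and never invoke an induction hypothesis.
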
 
\begin{proof}
By the recursive construction in Proposition \ref{p-tilde-hyper-rec}, $\Tilde{Q}_n$ has twice the number of edges of $\Tilde{Q}_{n-1}$ (since it has two copies of it),  plus $2^{n-1}$ edges, one for each vertex of $\Tilde{Q}_{n-1}$, plus $2^{n-2}$ edges, one for each vertex of 
$\Tilde{Q}_{n-1}$ that ends with a $1$. 
\end{proof}

By solving the recurrence we find the exact solution $\vert E(\tilde Q_n )\vert=(3n-1)\cdot2^{n-2}$ (Sequence A053220 in \cite{Sloane}). Let $\tilde{EQ}(N)$ be the number of edges of the  tilde-hypercube with $N$ vertices. Then,
\begin{equation}\label{eq-tilde-hyper}
  \tilde{EQ}(N)=N(3\log N -1)/4  
\end{equation} 


\section{The tilde-hypercube avoiding a word}
The so-called generalized Fibonacci cube has  been defined in \cite{IlickKR2012} 
as the subgraph of the hypercube where the vertices having a given word as factor are removed. 
In analogy, we introduce the definitions of the tilde-hypercube and the

tilde-Fibonacci cube.

\begin{definition}
	The {\em  $n$-tilde-hypercube avoiding a word $f$}, denoted $\tilde{Q}_n(f)$, is the subgraph of $\tilde{Q}_n$ obtained by removing those vertices which contain $f$ as a factor.
\end{definition}

Next proposition states the relationship between tilde-isometric words and subgraphs of the tilde-hypercube avoiding a word. The proof can be easily derived from the definitions.
\begin{proposition}\label{d-Cube-iso-word}
	A word $f\in\Sigma^*$ is tilde-isometric if and only if for all  {$n \geq |f|$}, 
 $\tilde{Q}_n(f)$ is an 
 isometric subgraph of $\tilde{Q}_n$.
\end{proposition}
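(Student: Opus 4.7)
The plan is to unfold both definitions and show that they express the same statement in two different languages (combinatorics on words versus graph theory). The key observation is that a path in $\tilde{Q}_n$ between $u$ and $v$ is exactly a sequence $(w_0, w_1,\ldots, w_h)$ with $w_0=u$, $w_h=v$ and ${\rm dist}_\sim(w_k, w_{k+1})=1$, i.e.\ a tilde-transformation from $u$ to $v$ of length $h$; hence $\dist_{\tilde{Q}_n}(u,v) = {\rm dist}_\sim(u,v)$. Moreover, a path lives entirely in the subgraph $\tilde{Q}_n(f)$ precisely when all intermediate $w_k$ are $f$-free, i.e.\ when the associated tilde-transformation is $f$-free.

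For the forward direction, I would fix $n \geq |f|$ and pick two vertices $u,v$ of $\tilde{Q}_n(f)$; both are $f$-free words of equal length $n \geq |f|$. By tilde-isometricity of $f$, there is a minimal tilde-transformation from $u$ to $v$ that is $f$-free. Reading this transformation as a path, it lies in $\tilde{Q}_n(f)$ and has length ${\rm dist}_\sim(u,v) = \dist_{\tilde{Q}_n}(u,v)$. This yields $\dist_{\tilde{Q}_n(f)}(u,v) \leq \dist_{\tilde{Q}_n}(u,v)$, and the converse inequality is automatic because $\tilde{Q}_n(f)$ is a subgraph of $\tilde{Q}_n$. So $\tilde{Q}_n(f)$ is isometric.

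For the converse, suppose $\tilde{Q}_n(f)$ is isometric in $\tilde{Q}_n$ for every $n \geq |f|$. Given any pair of $f$-free words $u,v$ of common length $m \geq |f|$, both are vertices of $\tilde{Q}_m(f)$. By the isometry hypothesis, $\dist_{\tilde{Q}_m(f)}(u,v) = \dist_{\tilde{Q}_m}(u,v) = {\rm dist}_\sim(u,v)$. A shortest path in $\tilde{Q}_m(f)$ realizing this distance corresponds, by the correspondence above, to a tilde-transformation from $u$ to $v$ of length ${\rm dist}_\sim(u,v)$ whose intermediate words are all $f$-free; this is precisely a minimal $f$-free tilde-transformation, so $f$ is tilde-isometric.

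I do not expect any real obstacle: the main thing to keep track of is the precise dictionary between tilde-transformations and paths in $\tilde{Q}_n$, and the fact that the length bound $m \geq |f|$ in Definition~\ref{d-tilde-iso} matches the requirement $n \geq |f|$ in the proposition, so the quantifiers on $n$ line up exactly. One minor subtlety worth a sentence is that minimality of the tilde-transformation matters for the forward direction (to get equality of distances), whereas for the converse one only needs that a shortest path in the subgraph yields a tilde-transformation of the right length, which is the definition of the subgraph distance.
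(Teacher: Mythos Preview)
Your approach is precisely what the paper intends: it does not give a proof at all, merely stating that it ``can be easily derived from the definitions,'' and your dictionary between tilde-transformations and paths in $\tilde{Q}_n$ is exactly that derivation.

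One small point to tighten in the converse direction: the paper's Definition~\ref{def:tilde-transformation} calls a tilde-transformation \emph{minimal} only when its length equals ${\rm dist}_\sim(u,v)$ \emph{and} each position is modified at most once. A shortest path in $\tilde{Q}_m(f)$ a priori yields only an $f$-free transformation of the correct length, not necessarily one satisfying the second clause (cf.\ the transformation $\tau'_1$ in Example~\ref{ex:tilde-trans}). You should add a sentence either observing that the length condition is the operative one for the equivalence, or invoking the result cited from~\cite{donz1} that a transformation of minimum length can always be taken to modify each position at most once.
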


\begin{example}
All binary words of length $2$ are both tilde-isometric (Proposition \ref{p-words-iso}) and Ham-isometric and for each $n\geq 1$, $\Tilde{Q}_n(10)$ and $Q_n(10)$ coincide. In fact, $V(\Tilde{Q}_n(10))=\{0^h1^k|\,\, h,k \geq 0, \hspace{0,2cm} h+k=n \}$ and $E(\Tilde{Q}_n(10))=\{(0^i1^j,0^{i-1}1^{j+1})| \,\, 1 \leq i,j \leq n\}$.
The case of word $11$ deserves to be treated in a separated section.
The other words of length $2$ are tilde-isometric by complement (see Remark \ref{r-rev-comp}).

\end{example}

\subsection{The tilde-Fibonacci cube}\label{ss-tilde-Fibo}
The tilde-hypercube avoiding word $11$ is called the tilde-Fibonacci cube, in analogy to the Fibonacci cube introduced by Hsu \cite{Hsu93}.
Here, we show a recursive construction of the tilde-Fibonacci cube; it allows to enumerate the number of its edges and then to compare it with the number of edges of the tilde-hypercube with the same number of vertices. 

\begin{definition}
The {\em $n$-tilde-Fibonacci cube}, denoted $\Tilde{F}_n$, is
$\Tilde{F}_n = \Tilde{Q}_n(11)$,
$n \geq 1$.
 \end{definition}

By Proposition \ref{p-vert-edge-hyper}, 
 $\vert V(\Tilde{F}_n) \vert=\vert V(F_n) \vert= f_{n+2}$. Among these vertices, $f_{n+1}$ end with a $0$ and $f_{n}$ end with a $1$. Figure \ref{f-tilde-hyper}(b) shows the tilde-Fibonacci cube of order $4$.

\begin{remark}\label{r-fibo-word}
Let $u\in V(F_{n-1})$,  $x \in \Sigma$. If  $u$ ends with $1$, then $ux \in V(\tilde{F}_n)$ iff $x=0$. If $u$ ends with $0$ then $ux \in V(\tilde{F}_n)$, for any $x \in \{0,1\}$.   
\end{remark}
 
\begin{proposition}\label{p-tilde-fibo-rec}
The tilde-Fibonacci cube $\tilde{F}_n$ can be recursively defined.
\end{proposition}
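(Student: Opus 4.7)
The plan is to mimic the proof strategy of Proposition~\ref{p-tilde-hyper-rec}, but with the extra bookkeeping forced by the constraint of avoiding $11$. First I would handle the small cases: $\tilde{F}_1$ has vertices $\{0,1\}$ joined by an edge, and $\tilde{F}_2$ is the triangle on $\{00,01,10\}$. For $n\geq 3$, by Remark~\ref{r-fibo-word}, the vertex set $V(\tilde{F}_n)$ partitions into two disjoint classes: the set $A=\{u0\,:\,u\in V(\tilde{F}_{n-1})\}$ (words ending in $0$), in natural bijection with $V(\tilde{F}_{n-1})$, and the set $B=\{u^*01\,:\,u^*\in V(\tilde{F}_{n-2})\}$ (words ending in $01$), in natural bijection with $V(\tilde{F}_{n-2})$. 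This partition is the skeleton of the recursion.

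Next I would describe the edges class by class. For edges within $A$, Lemma~\ref{l-extension} immediately gives $\mathrm{dist}_\sim(u0,v0)=\mathrm{dist}_\sim(u,v)$, so the subgraph induced on $A$ is isomorphic to $\tilde{F}_{n-1}$. For edges within $B$, I would argue that a replacement or swap transforming $u^*01$ into $v^*01$ cannot touch the last two symbols: a replacement at position $n-1$ or $n$ changes the common suffix $01$, while a swap at position $n-1$ changes the suffix to $10$ and a swap at position $n-2$ requires $u^*$ to end in $1$ and produces a forbidden $11$ in positions $n-1,n$. Hence only operations on the first $n-2$ positions remain, giving a subgraph isomorphic to $\tilde{F}_{n-2}$.

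For cross edges between $A$ and $B$, I would note that the last symbols already differ, so the operation must involve position $n$. A single replacement must flip that position, producing edges $u^*00\text{--}u^*01$ for every $u^*\in V(\tilde{F}_{n-2})$. A single swap must be at position $n-1$, which forces $u=u^*1$ with $u^*1\in V(\tilde{F}_{n-1})$, i.e.\ $u^*$ empty or ending in $0$; this yields edges $u^*10\text{--}u^*01$. These two families, combined with the copies of $\tilde{F}_{n-1}$ and $\tilde{F}_{n-2}$, define $\tilde{F}_n$.

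The main obstacle is the case analysis for cross edges, particularly making sure that the ``swap-type'' cross edges are neither missed nor produced in excess, and that the swap at position $n-2$ inside $B$ is correctly ruled out. Once the exhaustive case analysis on the position of the operation is written down, the verification that every tilde-distance-$1$ pair falls into exactly one of the four categories above is routine, and gives a well-defined recursive construction of $\tilde{F}_n$ from $\tilde{F}_{n-1}$ and $\tilde{F}_{n-2}$.
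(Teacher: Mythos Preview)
Your proposal is correct and follows essentially the same approach as the paper: partition $V(\tilde F_n)$ into words ending in $0$ (a copy of $\tilde F_{n-1}$) and words ending in $01$ (a copy of $\tilde F_{n-2}$), invoke Lemma~\ref{l-extension} for the internal edges, and then list the cross edges coming from the replacement $u^*00\text{--}u^*01$ and the swap $u^*10\text{--}u^*01$. Your case analysis is in fact more explicit than the paper's---you spell out why no other cross edges or $B$-internal edges arise---whereas the paper condenses all of that into a one-line appeal to Remark~\ref{r-fibo-word} and Lemma~\ref{l-extension}.
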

\begin{proof}
   If $n=1$, $\tilde{F}_1$ has two vertices $0$ and $1$ connected by an edge. 
   If $n=2$, $\tilde{F}_2$ has three vertices $00$, $01$ and $10$ and $E(\tilde{F}_2)=\{(00,10), (00,01), (01,10)\}$.

Suppose  $\tilde{F}_i$ are defined for all $i<n$. For any $n\geq 3$, $\tilde{F}_n$ can be constructed from a copy of $\tilde{F}_{n-1}$ (say $\tilde{\bf F}_{n-1}$) and a copy of $\tilde{F}_{n-2}$ (say $\tilde{\bf F}_{n-2}$), where each vertex $u$ in $\tilde{F}_{n-1}$ is replaced by $u0$ in $\tilde{\bf F}_{n-1}$, and each vertex $v$ in $\tilde{F}_{n-2}$ is replaced  by $v01$ in $\tilde{\bf F}_{n-2}$. Further, for any $v$ of length $n-2$, there is an edge between $v00$ in $\tilde{\bf F}_{n-1}$ and $v01$ in $\tilde{\bf F}_{n-2}$, an edge between $v10$ in $\tilde{\bf F}_{n-1}$ and $v01$ in $\tilde{\bf F}_{n-2}$ (see the green edges in Fig. \ref{f-tilde-hyper}). By Remark \ref{r-fibo-word} and Lemma \ref{l-extension} no further edges exist in $\tilde{F}_n$.
\end{proof}

\begin{corollary}\label{c-edges-tilde-fibo}
Let $\tilde F_n$ be the tilde-Fibonacci cube. Then $\vert E(\Tilde F_1)\vert =1, \vert E(\Tilde F_2)\vert =3$ and 
$$\vert E(\Tilde F_n)\vert =\vert E(\Tilde F_{n-1})\vert + \vert E(\Tilde F_{n-2})\vert + f_{n+1},
\forall n \geq 2.$$ 
\end{corollary}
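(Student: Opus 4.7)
The plan is to count the edges of $\tilde{F}_n$ directly from the recursive decomposition supplied by Proposition \ref{p-tilde-fibo-rec}. The base cases are immediate: $\tilde{F}_1$ is the single edge between $0$ and $1$, so $|E(\tilde{F}_1)|=1$, and $\tilde{F}_2$ consists of the triangle on $\{00,01,10\}$, giving $|E(\tilde{F}_2)|=3$. For $n\geq 3$, I would partition $E(\tilde{F}_n)$ into three disjoint classes: the edges internal to the copy $\tilde{\bf F}_{n-1}$, the edges internal to the copy $\tilde{\bf F}_{n-2}$, and the ``cross-edges'' joining one copy to the other. The relabellings $u \mapsto u0$ and $v\mapsto v01$ are bijective and preserve tilde-distance on each copy (by Lemma \ref{l-extension}), so the first two classes contribute exactly $|E(\tilde{F}_{n-1})|$ and $|E(\tilde{F}_{n-2})|$, respectively, and by the final sentence in the proof of Proposition \ref{p-tilde-fibo-rec} no edges of $\tilde{F}_n$ lie outside these three classes.

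The core of the argument is counting the cross-edges. Proposition \ref{p-tilde-fibo-rec} describes two families of them: for every $v \in \Sigma^{n-2}$ such that the relevant words are vertices of $\tilde{F}_n$, one obtains an edge $(v00, v01)$ and, when admissible, an edge $(v10, v01)$. For the first family, if $v \in V(\tilde{F}_{n-2})$ then both $v00$ and $v01$ are automatically $11$-free, so this family contributes exactly $|V(\tilde{F}_{n-2})| = f_n$ edges. For the second family, $v01$ is always $11$-free when $v$ is, but $v10$ is $11$-free if and only if $v$ does not end in $1$; the number of such $v$ coincides with the number of vertices of $\tilde{F}_{n-2}$ ending in $0$, which from the count recalled immediately after Remark \ref{r-fibo-word} equals $f_{n-1}$. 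Adding the two families yields $f_n + f_{n-1} = f_{n+1}$ cross-edges, and summing the three contributions gives the announced recurrence.

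The main obstacle I anticipate is the bookkeeping in the cross-edge count: one must be careful not to double-count an edge and to discard the ``ghost'' pairs $(v10, v01)$ produced by choices of $v$ ending in $1$, since in that case $v10$ is not a vertex of $\tilde{F}_n$. Once the split into ``$v$ ends in $0$'' versus ``$v$ ends in $1$'' is made explicit and the identity $f_n+f_{n-1}=f_{n+1}$ is invoked, the recurrence drops out cleanly, and the claim follows by induction on $n$.
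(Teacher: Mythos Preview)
Your argument is correct and follows the same overall strategy as the paper: use the recursive decomposition of Proposition~\ref{p-tilde-fibo-rec}, keep the internal edges of the two copies, and count the cross-edges separately. The only difference is in the bookkeeping for the cross-edges. You count them from the $\tilde{\bf F}_{n-2}$ side, splitting into the $f_n$ edges $(v00,v01)$ and the $f_{n-1}$ edges $(v10,v01)$ (the latter restricted to $v$ ending in $0$), and then invoke $f_n+f_{n-1}=f_{n+1}$. The paper instead counts from the $\tilde{\bf F}_{n-1}$ side: every vertex there is of the form $u0$ with $u\in V(\tilde F_{n-1})$, and writing $u=vx$ with $x\in\{0,1\}$ one sees that $u0$ is joined to exactly one vertex of $\tilde{\bf F}_{n-2}$, namely $v01$ (via a replacement if $x=0$, via a swap if $x=1$). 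This yields $|V(\tilde F_{n-1})|=f_{n+1}$ cross-edges in one stroke, without the case split or the Fibonacci identity. Both routes are valid; the paper's is a touch more direct, while yours makes the role of the swap edges more explicit.
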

\begin{proof}
From the proof of Proposition \ref{p-tilde-fibo-rec}, $\Tilde F_1$ has $1$ edge and $\Tilde F_2$ has $3$ edges.
Moreover, $\vert E(\Tilde F_n)\vert $ is the sum of $\vert E(\Tilde F_{n-1})\vert $ with $\vert E(\Tilde F_{n-2})\vert $, plus one edge for each vertex in $\tilde{\bf F}_{n-1}$, i.e. $f_{n+1}$, by Proposition. \ref{p-vert-edge-hyper}.
\end{proof}
By solving the recurrence in Corollary \ref{c-edges-tilde-fibo}, we find the following exact solution 
$$\vert E(\Tilde F_n)\vert=\frac{(n+1)f_{n+3} + (n-2)f_{n+1}}{5}$$
(Sequence A023610 in \cite{Sloane} for $\vert E(\Tilde F_{n+1}\vert$).

Since the number of vertices of $\tilde F_n$ is $f_{n+2}$, from the previous formula it follows that the tilde-Fibonacci cube has $O(N\log N)$ edges, where $N$ is the number of vertices, as for the tilde-hypercube (see Equation (\ref{eq-tilde-hyper})).

To compare the number of edges of the Fibonacci cube and the hypercube, in \cite{Hsu93} the authors prove that the ratio between the number of edges $EF(N)$  and $EQ(N)$ in the Fibonacci cube and the  hypercube with $N$ vertices, respectively,  is asymptotically bounded by $0.79 < EF(N)/EQ(N) < 0.80$.  
In a\-na\-lo\-gy with this result, by using the same method as in \cite{Hsu93}, we have the following corollary.

\begin{corollary}
Let $\Tilde{EF}(N)$ and $\tilde{EQ}(N)$ be the number of edges of the tilde-Fibonacci cube and of the tilde-hypercube with $N$ vertices, respectively. Then, their ratio is asymptotically bounded by

$$0.85 < \frac{\tilde {EF}(N)}{\tilde {EQ}(N)} < 0.86$$ 

\end{corollary}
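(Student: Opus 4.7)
The plan is to mimic the approach of Hsu~\cite{Hsu93} used for the classical Fibonacci cube: apply Binet's formula $f_m = (\varphi^m-\psi^m)/\sqrt 5$, with $\varphi=(1+\sqrt 5)/2$ and $\psi=(1-\sqrt 5)/2$, to the closed-form expressions for $|E(\tilde F_n)|$ and $|E(\tilde Q_n)|$ derived earlier, re-parameterise everything in terms of the common number of vertices $N$, and then take the limit of the ratio; since the limit will land strictly inside $(0.85,0.86)$, the announced two-sided bound will follow for all sufficiently large $N$.

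First I would set $N = f_{n+2}$, so that $N = \varphi^{n+2}/\sqrt 5 + O(|\psi|^n)$, giving $n \sim \log_\varphi N$ and $\varphi^{n+1} \sim N\sqrt 5/\varphi$. Next, substituting Binet into $|E(\tilde F_n)| = ((n+1)f_{n+3}+(n-2)f_{n+1})/5$ and using $\varphi^2=\varphi+1$ to rewrite $(n+1)\varphi^2+(n-2) = n(\varphi+2)+(\varphi-1)$, I would obtain
$$|E(\tilde F_n)| \sim \frac{n\,\varphi^{n+1}(\varphi+2)}{5\sqrt 5} \sim \frac{n\,N\,(\varphi+2)}{5\,\varphi}.$$
For the denominator I would use the explicit formula $\tilde{EQ}(N) = N(3\log_2 N-1)/4 \sim 3N\log_2 N/4$ already established in Section~\ref{s-tilde-hyper}, viewing $\tilde{EQ}(\cdot)$ as the analytic interpolation of the values attained at $N=2^k$, exactly as is done in~\cite{Hsu93}. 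Finally, substituting $n \sim \log_\varphi N = (\log_2 N)/(\log_2\varphi)$ into the quotient, the $\log_2 N$ factors cancel and the ratio converges to
$$\lim_{N\to\infty}\frac{\tilde{EF}(N)}{\tilde{EQ}(N)} = \frac{4(\varphi+2)\ln 2}{15\,\varphi\,\ln\varphi},$$
which a short numerical check (using $\varphi\approx 1.618$, $\ln 2\approx 0.6931$, $\ln\varphi\approx 0.4812$) evaluates to approximately $0.859$.

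The hard part will be promoting this limit statement into a strict two-sided inequality valid for all sufficiently large $N$. This means tracking the $O(|\psi|^n)$ correction from Binet, the residual constant term $\varphi-1$ left inside the bracket of the numerator, and the $-1$ in $3\log_2 N-1$; each of these contributes a term that decays in relative order, so they can all be absorbed into the comfortable gap between the limit value $\approx 0.859$ and the endpoints $0.85$ and $0.86$ once $N$ is large enough, thereby yielding the announced bound.
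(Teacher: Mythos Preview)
Your proposal is correct and follows essentially the same approach as the paper: set $N=f_{n+2}$, plug in the closed forms $\tilde{EF}(f_{n+2})=((n+1)f_{n+3}+(n-2)f_{n+1})/5$ and $\tilde{EQ}(N)=N(3\log_2 N-1)/4$, and evaluate the ratio asymptotically. The paper's proof is a one-line sketch that simply asserts ``the thesis follows'' from this asymptotic comparison, whereas you have spelled out the Binet computation and identified the explicit limiting constant $4(\varphi+2)\ln 2/(15\varphi\ln\varphi)\approx 0.859$; your extra care in tracking the lower-order terms to upgrade the limit into a strict two-sided bound is a refinement the paper leaves implicit.
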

\begin{proof}
    By Equation (\ref{eq-tilde-hyper}) and Proposition \ref{p-vert-edge-hyper},  $\Tilde{EQ}(f_{n+2})=f_{n+2}(3\log f_{n+2}-1)/4$ and by Corollary \ref{c-edges-tilde-fibo},  
    $\tilde{EF}(f_{n+2})=((n+1)f_{n+3} + (n-2)f_{n+1})/5$. 
    By considering $\Tilde{EF}(f_{n+2})/\Tilde{EQ}(f_{n+2})$ asymptotically, the thesis follows.
\end{proof}

This proves that the number of edges of the tilde-Fibonacci cube is about $1/7$ less than the number of edges of the tilde-hypercube, with fixed number of vertices. The ratio is just slightly higher than in the Hamming case. This fact is not surprising  because the swap operation adds new edges, but, on the other hand, it shortens the average distances because a swap corresponds to two replacement operations. More formally, we have the following remark.

\vspace{-0.1cm}

\begin{remark}
In \cite{Hsu93} it is proven that the diameter $d(F_n)$ is $n$ and that the maximal distance involves the words  $(10)^{n/2}$ and $(01)^{n/2}$ for even $n$, and $(01)^{\lfloor n/2 \rfloor} 0$ and $(10)^{\lfloor n/2 \rfloor}1$ for odd $n$. If the tilde-distance is considered, then $d(\tilde F_n)=\lceil n/2 \rceil$. Indeed, the same words have maximal tilde-distance and the minimal tilde-transformation from one to the other  consists of $n/2$ swaps for even $n$ and $\lfloor n/2 \rfloor$ swaps and one replacement for odd $n$.

\end{remark}

\vspace{-0.5cm}

\begin{table}\small
    \centering
    \scalebox{0.68}{
    {\renewcommand{\arraystretch}{1.3}%
    \begin{tabular}{|c|c|c|c|c|c|c|c|c|c|c|c|c|c|}
    \hline 
    \emph{n} & 4 & 5 & 6 & 7 & 8 & 9 & 10 & 11 & 12 & 13 & 14 & 15 & 16 \\ 
    \hline
    $\vert V(Q_n)\vert$=$\vert V(\Tilde Q_n)\vert$ & 16 & 32 & 64 & 128 & 256 & 512 & 1024 & 2048 & 4096 & 8192 & 16384 & 32768 & 65536 \\ 
    \hline
    $\vert E(Q_n)\vert$ & 32 & 80 & 192 & 448 & 1024 & 2304 & 5120 & 11264 & 24576 & 53248 & 114688 & 245760 & 524288 \\ 
    \hline
    $\vert E(\Tilde Q_n)\vert$ & 44 & 112 & 272 & 640 & 1472 & 3328 & 7424 & 16384 & 35840 & 77824 & 167936 & 360448 & 770048 \\ 
    \hline
    $\vert V(\Tilde Q_n(11100))\vert$ & 16 & 31 & 60 & 116 & 224 & 432 & 833 & 1606 & 3096 & 5968 & 11504 & 22175 & 42744 \\
    \hline
    $\vert E(\Tilde Q_n(11100))\vert$ & 44 & 106 & 245 & 550 & 1208 & 2609 & 5569 & 11773 & 24691 & 51440 & 106566 & 219696 & 451005 \\
    \hline
    $R_\sim \ (11100)$ & 1 & 0,99 & 0,98 & 0,97 & 0,96 & 0,96 & 0,95 & 0,95 & 0,944 & 0,941 & 0,939 & 0,937 & 0,935 \\
    \hline
    $\vert V(Q_n(1010))\vert$ & 15 & 28 & 53 & 100 & 188 & 354 & 667 & 1256 & 2365 & 4454 & 8388 & 15796 & 29747 \\
    \hline
    $\vert E(Q_n(1010))\vert$ & 28 & 62 & 138 & 299 & 632 & 1323 & 2746 & 5645 & 11520 & 23377 & 47192 & 94830 & 189808 \\
    \hline
    $R_H \ (1010)$ & 0,96 & 0,92 & 0,91 & 0,90 & 0,89 & 0,88 & 0,88 & 0,87 & 0,869 & 0,866 & 0,863 & 0,861 & 0,859 \\
    \hline
   
    \end{tabular}}}
    \\
    \medskip
    \captionof{table}{Vertices and edges cardinality and ratio of some cubes of order $n= 4, \ldots ,16$} 
    \label{t-tilde-ver-edg}
\end{table}

\section{Conclusion and future work}
In this paper we have introduced the tilde-hypercube and the tilde-Fibonacci cube as a generalization of the  corresponding classical notions, with the tilde-distance in place of the Hamming one.

We have shown that, as in the classical case, the tilde-hypercube and the tilde-Fibonacci cube can be recursively defined. 

This made it possible to provide recursive and closed formulas for their number of edges with respect to the order. We used such results to quantify how many edges the tilde-Fibonacci cube has compared to the tilde-hypercube with the same number of vertices, and it turned out that his value is very close to the classical case.
However, the investigation  definitely deserves some deepening, since the hypercubes and the tilde-hypercubes are defined on different distances, which are supposed to be used for different applications.

Further, we have considered the hypercubes avoiding some special words, i.e., $1010$ and $11100$. The hypercube $Q_n(1010)$  is an isometric subgraph of $Q_n$, whereas $\tilde Q_n(1010)$ is not an isometric subgraph of $\tilde Q_n$.
On the contrary, the tilde-hypercube $\tilde Q_n(11100)$ is a 
isometric subgraph of $\tilde Q_n$, whereas $Q_n(11100)$ is not an isometric subgraph of $Q_n$ (cf. Propositions \ref {p-short-til-iso} and \ref {d-Cube-iso-word}).
Table \ref{t-tilde-ver-edg} resumes the first values of the number of vertices and edges of $Q_n$, $\tilde Q_n$, $\tilde Q_n(11100)$ and $Q_n(1010)$. Furthermore, for each $n$, the value of $R_H (1010)$ at column $n$  is the ratio between $\vert E(Q_n(1010))\vert$ and the number of edges of the hypercube having a number of vertices equal to $N=\vert V(Q_n(1010))\vert$, i.e. $(N\log N)/2$ (cf. Proposition \ref{p-vert-edge-hyper}).
Moreover, for each $n$, the value of $R_\sim (11100)$ at column $n$ is the ratio between $\vert E(\tilde Q_n(11100))\vert$ and the number of edges of the tilde-hypercube having a number of vertices equal to $N=\vert V(\tilde Q_n(11100))\vert$, that is $N(3\log N-1)/4$ (cf. Equation \ref{eq-tilde-hyper}).

We guess that both Fibonacci cubes and tilde-Fibonacci cubes 
are the best isometric cubes avoiding a word in terms of reduction of the number of edges, but at the moment the investigation is too germinal. We plan to continue the research in this direction and, above all, to study in deep  structural and topological properties of tilde-Fibonacci cubes.

\bibliographystyle{splncs04}
\bibliography{references}

\end{document}